%
%
%
%
%
%

\documentclass[reqno]{amsart}

\pdfoutput=1

\usepackage{hyperref}

\usepackage[french, english]{babel}
\usepackage[alphabetic]{amsrefs}
\usepackage[pdftex]{color}
\usepackage{amsxtra}
\usepackage{upref}
\usepackage{epstopdf}
\usepackage{graphicx,color}
\usepackage{hyperref}
\usepackage{longtable}
\usepackage{graphicx}
\usepackage{amsmath, amsfonts, amssymb,amscd, amstext, amsthm, 
mathtools, bezier, mathrsfs,enumerate}
\usepackage{url}
\usepackage{float}
\usepackage{caption}
\usepackage{tikz}
\usetikzlibrary{intersections,positioning,patterns,arrows,decorations.markings}
\usepackage[fit]{truncate}
\usepackage{anyfontsize}
\usepackage{t1enc}
\usepackage{synttree}
\usepackage{pst-plot}
\psset{algebraic}

\DeclareFontFamily{OML}{rsfs}{\skewchar\font'177}
\DeclareFontShape{OML}{rsfs}{m}{n}{ <5> <6> rsfs5 <7> <8> <9> rsfs7
  <10> <10.95> <12> <14.4> <17.28> <20.74> <24.88> rsfs10 }{}
\DeclareMathAlphabet{\mathfs}{OML}{rsfs}{m}{n}

\newtheorem{theorem}{Theorem}

\newtheorem{proposition}[theorem]{Proposition}

\theoremstyle{definition}
\newtheorem{definition}[theorem]{Definition}

\newtheorem{example}{Example}

\numberwithin{equation}{section}
\numberwithin{theorem}{section}

\newcommand{\intav}[1]{\mathchoice {\mathop{\vrule width 6pt height 3 pt depth  -2.5pt
\kern -8pt \intop}\nolimits_{\kern -6pt#1}} {\mathop{\vrule width
5pt height 3  pt depth -2.6pt \kern -6pt \intop}\nolimits_{#1}}
{\mathop{\vrule width 5pt height 3 pt depth -2.6pt \kern -6pt
\intop}\nolimits_{#1}} {\mathop{\vrule width 5pt height 3 pt depth
-2.6pt \kern -6pt \intop}\nolimits_{#1}}}

\newcommand{\intavl}[1]{\mathchoice {\mathop{\vrule width 6pt height 3 pt depth  -2.5pt
\kern -8pt \intop}\limits_{\kern -6pt#1}} {\mathop{\vrule width 5pt
height 3  pt depth -2.6pt \kern -6pt \intop}\nolimits_{#1}}
{\mathop{\vrule width 5pt height 3 pt depth -2.6pt \kern -6pt
\intop}\nolimits_{#1}} {\mathop{\vrule width 5pt height 3 pt depth
-2.6pt \kern -6pt \intop}\nolimits_{#1}}}

\newcommand{\cc}{\mathscr{C}}
\newcommand{\aaa}{\mathscr{A}}
\newcommand{\F}{\mathcal{F}}
\newcommand{\G}{\mathcal{G}}
\newcommand{\V}{\mathbb{V}}

\newcommand{\N}{\mathbb{N}}

\newcommand{\pp}{\mathscr{P}}
\newcommand{\etal}{\textit{et al\ }}
\newcommand{\eg}{e.g.\ }

\usepackage{scrextend}
\deffootnotemark{\textsuperscript{(\thefootnotemark)}}

\begin{document}


\title[Topological conjugacy]{Topological conjugacy for unimodal nonautonomous discrete dynamical systems}

\author{Ermerson Araujo}
\date{\today}
\keywords{Combinatorial equivalence, kneading sequences, unimodal maps}
\subjclass[2010]{37B10, 37E05}

\address{Ermerson Araujo, Departamento de Matem\'atica, Universidade Federal do Cear\'a (UFC), 
Campus do Pici, Bloco 914, CEP 60440-900, Fortaleza -- CE, Brasil}
\email{ermersonaraujo@gmail.com}

\begin{abstract}
The goal of this article is to study how combinatorial equivalence implies 
topological conjugacy. For that, we introduce the concept of kneading sequences for nonautonomous 
discrete dynamical systems and show that these sequences are a complete invariant for
topological conjugacy classes.

\end{abstract}

\maketitle


\section{Introduction}

A {\it nonautonomous discrete dynamical system} (short NDS) is a pair $(X,\mathcal{F})$, 
where $X$ is a metric space and
$\mathcal{F}=(f_n)_{n\geq1}$ is a sequence of continuous maps 
$f_n:X\to X$. The orbits of the system are described by the
maps $f^\ell_n:X\to X$, defined by
\[
f^\ell_n(x):=(f_{n+\ell-1}\circ\cdots\circ f_n)(x)\; \text{for each}\; n,\ell\in\N \;\text{and}\; x\in X,
\]
\[
f_n^0:=\text{id}_{X}\; \text{for each}\; n\in\N.
\]
The classical autonomous setting is obtained by letting $f_n=f$, for every $n\geq1$.
Furthemore, we define $f_n^{-\ell}:=(f^\ell_n)^{-1}$, which is only applied to sets. (We do not
assume that the maps $f_n$ are invertible.) 

Nonautonomous discrete dynamical systems were introduced by S. Kolyada and 
L. Snoha \cite{ks} motivated by the desire to understand better the 
topological entropy of skew products.
In recent years, a large number of papers have been devoted to dynamical
properties in nonautonomous discrete systems. 
Huang \etal \cite{hwzz} introduced and studied 
topological pressure for nonautonomous discrete dynamical systems.
Metric entropy of NDS has been studied in \cite{kawan11} and \cite{kawan22}.
The notion of chaos was extended to NDS setting by many authors 
(\eg \cite{tichen,shi,wuzhu,zhshsh}).
So, although recognizably distinct from classical autonomous dynamical systems, the theory
of the nonautonomuos discrete dynamical systems
has developed into a highly active field of research.
In this way, it is natural to search for ways to classify the NDS
in classes with similar dynamical behavior. 

The easiest way to see when two NDS have the same dynamical behavior 
is when there exists a topological conjugacy between them. 
For example, topological entropy for NDS is invariant by topological conjugacy \cite[Sec. 5]{ks}.
Let $(X, \mathcal{F})=(X, (f_n)_{n\geq1})$ and $(Y, \mathcal{G})=(Y, (g_n)_{n\geq1})$
be two nonautonomous discrete dynamical systems. We say that 
$(X, \mathcal{F})$ and $(Y, \mathcal{G})$
are topologically conjugate if there exists a sequence 
$(h_n)_{n\geq1}$ of homeomorphisms from $X$ into $Y$ such that
both families $(h_n)_{n\geq1}$ and $(h_n^{-1})_{n\geq1}$ are equicontinuous and
$h_{n+1}\circ f_n=g_n\circ h_n$ for every $n\geq1$.
If $h_n$ is a continuous surjective map for all $n\geq1$, then we say that
$(X, \mathcal{F})$ and $(Y, \mathcal{G})$
are topologically semi-conjugate. When $X, Y$ are intervals we also require
that all homeomorphisms $h_n$ are order preserving. 

We can not remove the equicontinuity condition of the family $(h_n)_{n\geq1}$ 
as this would imply that all NDS $(f_n)_{n\geq1}$, with $f_n$ homeomorphism
for each $n\geq1$, are topologically conjugate to a trivial NDS, see \cite[Prop. 2.1]{AF}. 

Thus, inspired by this we ask the following.

\medskip
\noindent
{\bf Problem 1:} Let $(X, \mathcal{F})$ and $(Y, \mathcal{G})$ be two 
nonautonomous discrete dynamical systems. 
Under which conditions $(X, \mathcal{F})$ and $(Y, \mathcal{G})$
are topologically (semi-)conjugate?

\medskip
In this short work, we introduce the notion of combinatorial equivalence for NDS 
on the particular case where $X$ is an interval and $f_n$ is a unimodal 
map for every $n\geq1$. We can give the following answer to Problem 1.

\begin{theorem}\label{thmA}
Let $(J^\F, \F)$ and $(J^\G, \G)$ be two unimodal nonautonomous
discrete dynamical systems and assume that
both satisfy the limit property. Then $(J^\F,\F)$ and $(J^\G,\G)$ are topologically 
conjugate if and only if they have the same kneading sequence.
\end{theorem}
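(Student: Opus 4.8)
The plan is to build the conjugacy out of an itinerary (symbolic) coding, following Milnor--Thurston theory but carried out level by level along the nonautonomous system. Write $J^\F_n$ for the interval carrying $f_n$ and $(c_n)_{n\ge1}$ for the turning points of $\F$, fix the alphabet $\mathcal A=\{L,C,R\}$ with $L<C<R$, and for each $n\ge1$ define the itinerary map $\theta_n^\F\colon J_n^\F\to\mathcal A^{\N}$ by declaring the $k$-th entry of $\theta_n^\F(x)$ to be $L$, $C$ or $R$ according as $f_n^k(x)$ lies left of, at, or right of $c_{n+k}$. Equip $\mathcal A^{\N}$ with the signed lexicographic order $\preceq$ (compare entries by $L<C<R$, but reverse the sense of the comparison each time the common initial block has contained an $R$). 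The kneading sequence is $K^\F=(\nu_n^\F)_{n\ge1}$ with $\nu_n^\F:=\theta_{n+1}^\F\bigl(f_n(c_n)\bigr)$, and similarly for $\G$. Two formal facts drive everything: the shift relation $\theta_{n+1}^\F\circ f_n=\sigma\circ\theta_n^\F$ ($\sigma$ deletes the first symbol), which is immediate; and monotonicity, $x<y\Rightarrow\theta_n^\F(x)\preceq\theta_n^\F(y)$, proved by the usual argument — examine the first index $m$ where the two itineraries differ and check that $f_n^m$ restricted to $[x,y]$ is monotone with sign $(-1)^{\#\{k<m:\,k\text{-th symbol}=R\}}$. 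The only departure from the autonomous case is that the reference turning point slides from $c_n$ to $c_{n+1}$ to $c_{n+2}$, which changes nothing, since at step $k$ the two orbits are compared under the \emph{same} map $f_{n+k}$.

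For the interesting implication, assume $K^\F=K^\G$. The first substantial point, and the first place the limit property is used, is that under it every $\theta_n^\F$ is \emph{injective}. Intuitively the limit property excludes a nondegenerate interval $T$ on which $\theta_n^\F$ is constant, i.e. with each $f_n^k(T)$ a nondegenerate interval avoiding all turning points $c_{n+k}$ — a homterval-type object; one would first rule out, directly from the limit property, that the coding is eventually ``frozen'', and then promote this to global injectivity using monotonicity (a monotone map nonconstant on every subinterval is injective). Granting injectivity, monotonicity makes $\theta_n^\F$ an order isomorphism from $J_n^\F$ onto its image $\Sigma_n^\F:=\theta_n^\F(J_n^\F)$; and — this is standard kneading theory transplanted level by level — $\Sigma_n^\F$ is exactly the set of \emph{admissible} sequences, admissibility being a condition on all shifts of the sequence expressed through $\preceq$ and the $\nu_m^\F$ (the nonautonomous kneading inequalities). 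Hence $K^\F=K^\G$ forces $\Sigma_n^\F=\Sigma_n^\G=:\Sigma_n$ for every $n$, and we may set
\[
h_n:=(\theta_n^\G)^{-1}\circ\theta_n^\F\colon J_n^\F\longrightarrow J_n^\G .
\]
Each $h_n$ is a composition of order isomorphisms, hence an order-preserving homeomorphism, and the conjugacy equation drops out of the shift relation: $\theta_{n+1}^\G\bigl(h_{n+1}(f_n(x))\bigr)=\theta_{n+1}^\F(f_n(x))=\sigma\theta_n^\F(x)=\sigma\theta_n^\G(h_n(x))=\theta_{n+1}^\G\bigl(g_n(h_n(x))\bigr)$, so $h_{n+1}\circ f_n=g_n\circ h_n$ by injectivity of $\theta_{n+1}^\G$.

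What is genuinely new, and what I expect to be the crux, is the uniformity built into the NDS notion of conjugacy: one must show that $(h_n)_{n\ge1}$ and $(h_n^{-1})_{n\ge1}$ are equicontinuous. Here the limit property is spent a second time, and essentially: it forces the combinatorial data (turning points, critical values, hence the cylinder structure of $\Sigma_n$) to converge as $n\to\infty$, so that the modulus of continuity of $\theta_n^\F$ and of $(\theta_n^\F)^{-1}$ can be chosen independently of $n$. Concretely, given $\varepsilon>0$ one wants an $N$, uniform in $n$, such that every level-$n$ admissible cylinder of depth $N$ has diameter $<\varepsilon$; then $|x-y|$ small forces $\theta_n^\F(x)$ and $\theta_n^\F(y)$ to share a long initial block, hence $\theta_n^\G(h_n(x))$ and $\theta_n^\G(h_n(y))$ to share it, hence $|h_n(x)-h_n(y)|<\varepsilon$, and symmetrically for $h_n^{-1}$. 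Making ``depth-$N$ cylinders shrink uniformly in $n$'' precise is the heart of the argument and is exactly the quantitative content of the limit property.

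Finally, the converse is the soft direction. If $(h_n)$ is an order-preserving conjugacy, then the relation $h_{n+1}\circ f_n=g_n\circ h_n$ together with the fact that each $h_m$ is a homeomorphism forces $h_n$ to carry the unique turning point $c_n^\F$ of $f_n$ (intrinsically characterized as the only point near which $f_n$ is not locally injective) to the turning point $c_n^\G$ of $g_n$; order preservation then carries the sign partition of $f_m$ to that of $g_m$ symbol-for-symbol for every $m$, so conjugate points have identical itineraries. Reading this along the orbit of the critical value gives $\nu_n^\F=\theta_{n+1}^\F(f_n(c_n^\F))=\theta_{n+1}^\G(g_n(c_n^\G))=\nu_n^\G$ for all $n$, i.e. $K^\F=K^\G$. $\blacksquare$
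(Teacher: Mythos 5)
Your overall architecture (code points by itineraries, order the symbol space, invert the coding on one side to build $h_n$, then fight for equicontinuity) is a legitimate Milnor--Thurston-style route, and your treatment of the easy direction matches the paper's Proposition \ref{prop1}. But the forward direction has a genuine gap at its central step. You define $h_n:=(\theta_n^\G)^{-1}\circ\theta_n^\F$, which requires $\theta_n^\F(J^\F)\subseteq\theta_n^\G(J^\G)$, and you obtain this by asserting that the image of the itinerary map is \emph{exactly} the set of sequences satisfying the kneading inequalities, hence determined by $K^\F=K^\G$. That assertion is not only unproved in the nonautonomous setting --- it is false as stated even for a single unimodal map: for the full tent map the sequence $RRLLL\ldots$ satisfies the kneading inequalities but is the itinerary of no point (only $c$ maps to the right endpoint), so the image of $\theta$ is a proper, non-closed subset of the admissible sequences. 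Equality of the two images is essentially the whole content of the theorem, and it cannot be quoted as ``standard kneading theory transplanted level by level.'' The paper avoids this entirely by never inverting the coding on all of $J^\G$: it builds $h_n$ inductively as an order-preserving bijection between the countable sets $\cc_n(\F)$ and $\cc_n(\G)$ of preimages of turning points (Theorem \ref{thm1}), where equality of kneading sequences directly controls, level by level, whether each monotonicity interval gets cut, and then extends $h_n$ by monotonicity and density (Proposition \ref{prop2}).

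The equicontinuity step is also not carried out, and one intermediate claim is wrong: it is not true that $|x-y|$ small forces $\theta_n^\F(x)$ and $\theta_n^\F(y)$ to share a long initial block --- two points on opposite sides of a preimage of a turning point are arbitrarily close yet their itineraries diverge early. What saves the argument, and what the paper actually proves, is a two-sided quantitative statement: for a fixed depth $k$ and all large $n$, the cut points $\cc_n^k(\G)$ are $\varepsilon/4$-dense in $J^\G$ (so each level-$n$ depth-$k$ interval has small image), while the cut points $\cc_n^k(\F)$ are uniformly separated by some $M/2>0$ (so two $\delta$-close points straddle at most one cut point and hence land in at most two adjacent intervals). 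Both bounds come from uniform convergence $f_n\to f$, $g_n\to g$ together with non-periodicity of the limit turning point; the finitely many remaining levels $n<N$ are handled by plain continuity. Your sketch names the right difficulty (``depth-$N$ cylinders shrink uniformly in $n$'') but supplies neither the density/separation estimates nor the adjacent-cylinder case analysis, so as written the proof is incomplete even granting the realization step.
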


To prove this theorem, we will construct kneading sequences in a similar way as
Milnor and Thurston made in their famous paper \cite{MT}.

%


\section{Kneading sequences for UNDS}

Let $(J^\mathcal{F}, \mathcal{F})$ be the NDS defined as follows:
let $J^\F=[a^\F, b^\F]$ be an interval, and 
$f_n:J^\F\to J^\F$ be a continuous map satisfying
$f_n(a^\F)=f_n(b^\F)=a^\F$ for each $n\geq1$. 
Besides that, there is $c_n^\F\in(a^\F,b^\F)$ 
such that $f_n\restriction_{[a^\F, c_n^\F]}$ is strictly increasing 
and $f_n\restriction_{[c_n^\F, b^\F]}$ is strictly decreasing.
The points $\{c_n^\F:\; n\geq1\}$ are called {\it turning points} of the NDS.
We call the NDS defined above {\it unimodal nonautonomous discrete dynamical system} (short UNDS)

Let's proceed by constructing the symbolic space for UNDS.
Consider the alphabet $\mathcal{A}_\F=\{L, c_n^\F, R: n\geq1\}$.  

\medskip
\noindent
{\sc Address of a point:} Let $n\geq1$. The {\em address} of a point $x\in J^\F$ 
on the level $n$ is the letter $i_{\F,n}(x)\in\mathcal A_\F$ defined by
$$
i_{\F,n}(x)= \left\{
\begin{array}{ll}
L    & \textrm{, if }  x\in[a^\F,c_n^\F)     \\
c_n^\F  & \textrm{, if }  x=c_n^\F \\
R    & \textrm{, if }  x\in(c_n^\F,b^\F].
\end{array}
\right.
$$

\medskip
\noindent
{\sc Itinerary of a point:} Let $n\geq1$. The {\em itinerary} of a point $x\in J^\F$
on the level $n$ is the sequence $I_{\mathcal{F},n}(x)\in\mathcal A_\F^{\{0,1,2,\ldots\}}$ defined by 
\[
I_{\mathcal{F},n}(x)=(i_{\mathcal{F},n}(x),i_{\mathcal{F},n+1}(f_n^1(x)),\ldots,
i_{\F,n+\ell}(f_n^\ell(x)),\ldots).
\]

\medskip
\noindent
{\sc Kneading Sequence:} The {\em kneading sequence} of $(J^\F,\F)$ 
is the sequence $\V(\F)=\{\V_n^\F\}_{n\geq1}$, where $\V_n^\F:=I_{\F,n}(c_n^\F)$.

\medskip  
The proposition below ensures that the kneading sequences 
are preserved by topological conjugacy. 

\begin{proposition}\label{prop1}
Let $(J^\F,\F)$ and $(J^\mathcal{G},\mathcal{G})$ be two UNDS. 
If $(J^\F,\F)$ and $(J^\mathcal{G},\mathcal{G})$ are topologically conjugate
then $\V(\F)=\V(\mathcal{G})$.
\end{proposition}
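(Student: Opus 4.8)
The plan is to show that the conjugating homeomorphisms $(h_n)_{n\geq1}$ transport the whole combinatorial structure of $(J^\F,\F)$ onto that of $(J^\G,\G)$ level by level, and in particular carry turning points to turning points. Throughout, the only properties of the $h_n$ that are used are that each $h_n\colon J^\F\to J^\G$ is an order-preserving homeomorphism (so that $h_n(a^\F)=a^\G$ and $h_n(b^\F)=b^\G$) and that $h_{n+1}\circ f_n=g_n\circ h_n$; the equicontinuity of $(h_n)_{n\geq1}$ and $(h_n^{-1})_{n\geq1}$ plays no role here (it will matter only for the converse implication in Theorem~\ref{thmA}). To compare kneading sequences we use the evident identification of alphabets $L\leftrightarrow L$, $R\leftrightarrow R$, $c_n^\F\leftrightarrow c_n^\G$.

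The first and central step is to prove that $h_n(c_n^\F)=c_n^\G$ for every $n\geq1$. Writing the conjugacy relation as $g_n=h_{n+1}\circ f_n\circ h_n^{-1}$ and using that $h_n^{-1}$ and $h_{n+1}$ are order-preserving homeomorphisms while $f_n$ is strictly increasing on $[a^\F,c_n^\F]$ and strictly decreasing on $[c_n^\F,b^\F]$, one checks that $g_n$ is strictly increasing on $[a^\G,h_n(c_n^\F)]$ and strictly decreasing on $[h_n(c_n^\F),b^\G]$ (the point $h_n(c_n^\F)$ being interior because $h_n$ is an order-preserving bijection of the intervals). Since the turning point of a unimodal map is by definition the unique point at which monotonicity changes in this way, this forces $h_n(c_n^\F)=c_n^\G$.

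Next I would record two routine facts. First, because $h_n$ is order-preserving and sends $a^\F,c_n^\F,b^\F$ to $a^\G,c_n^\G,b^\G$, the points $x$ and $h_n(x)$ lie in corresponding pieces of the respective partitions, hence $i_{\G,n}(h_n(x))=i_{\F,n}(x)$ for all $x\in J^\F$ and all $n\geq1$ (under the alphabet identification). Second, an induction on $\ell$ starting from $h_{n+1}\circ f_n=g_n\circ h_n$ gives the commutation $g_n^\ell\circ h_n=h_{n+\ell}\circ f_n^\ell$ for all $n,\ell\geq1$ (and trivially for $\ell=0$). Combining these, for every $\ell\geq0$,
\[
i_{\G,n+\ell}\bigl(g_n^\ell(h_n(x))\bigr)=i_{\G,n+\ell}\bigl(h_{n+\ell}(f_n^\ell(x))\bigr)=i_{\F,n+\ell}\bigl(f_n^\ell(x)\bigr),
\]
so that $I_{\G,n}(h_n(x))=I_{\F,n}(x)$ for all $x$ and all $n$. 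Taking $x=c_n^\F$ and invoking the first step, $\V_n^\G=I_{\G,n}(c_n^\G)=I_{\G,n}(h_n(c_n^\F))=I_{\F,n}(c_n^\F)=\V_n^\F$ for every $n\geq1$, i.e.\ $\V(\F)=\V(\G)$.

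I expect the only genuinely non-formal point to be the first step: showing that an order-preserving conjugacy cannot displace the turning point. This is where the unimodal hypothesis—strict monotonicity on each side of $c_n$—is really used; the two facts in the third paragraph and the final substitution are straightforward bookkeeping with addresses and itineraries.
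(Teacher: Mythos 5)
Your proposal is correct and follows essentially the same route as the paper: establish $h_n(c_n^\F)=c_n^\G$ from the order-preserving conjugacy and the uniqueness of the turning point, then push the identity through the commutation $h_{n+\ell}\circ f_n^\ell=g_n^\ell\circ h_n$ to match addresses level by level. You merely spell out the details (the conjugation argument for the first step and the induction for the commutation) that the paper leaves implicit.
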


Before going to the proof, we observe that $\V(\F)=\V(\mathcal{G})$ means
that we are identifying $\mathcal{A}_\F$ with $\mathcal{A}_\G$.

\begin{proof}
We have shown that $I_{\F,n}(c_n^\F)=I_{\G,n}(c_n^\G)$ for all $n\geq1$.
Let $(h_n)_{n\geq1}$ be a conjugacy between $(J^\F,\F)$ and $(J^\G,\G)$.
Since $h_n:J^\F\to J^\G$ is an order preserving homeomorphism,
$h_n$ sends the increasing (decreasing) interval of $f_n$ to the increasing (decreasing)
interval of $g_n$. Thus, for all $n\geq1$ we have $h_n(c_n^\F)=c_n^\G$ and 
\[
(h_{n+\ell}\circ f_n^\ell)(c_n^\F)=(g_n^\ell\circ h_n)(c_n^\F)=g_n^\ell(c_n^\G),\ \forall\ell\geq 1.
\]
Once $h_{n+\ell}$ is ordering preserving on $J^\F$ for all $\ell\geq1$,
\[
i_{\F,n+\ell}(f_n^\ell(c_n^\F))=i_{\G,n+\ell}(g_n^\ell(c_n^\G)).
\]
Therefore $I_{\F,n}(c_n^\F)=I_{\G,n}(c_n^\G)$. 
\end{proof}


\section{Combinatorial equivalence for UNDS}

The purpose of this section is to introduce the concept of
combinatorial equivalence between two UNDS.
We will prove that equality between kneading sequences
is sufficient to ensure combinatorial equivalence.

For each $n\geq1$ set
\[
\cc_n(\F)=\left\{x\in J^\F;\;\exists\; \ell\geq0 \;
\textrm{such that}\; f_n^\ell(x)=c_{n+\ell}^\F\right\}.
\]

\begin{definition}
We say that two UNDS $(J^\F,\F)$ and $(J^\mathcal{G},\mathcal{G})$ with
turning points $\{c^\F_n;\;n\geq1\}$ respectively $\{c^\mathcal{G}_n;\;n\geq1\}$ 
are {\it combinatorially equivalent} if there exists a family of order preserving bijections
$h_n:\cc_n(\F)\to\cc_n(\mathcal{G})$ such that $h_{n+1}\circ f_n=g_n\circ h_n$
on $\cc_n(\F)\backslash\{c_n^\F\}$ for all $n\geq1$.
\end{definition}

In the next section we prove that, with the right hypotheses (limit property), the 
equality between kneading sequences implies topological conjugacy. 
The following theorem is fundamental for that.

\begin{theorem}\label{thm1}
Let $(J^\F,\F)$ and $(J^\mathcal{G},\mathcal{G})$ be two UNDS with
kneading sequences $\V(\F)$ and $\V(\mathcal{G})$. If
$\V(\F)=\V(\G)$, then $(J^\F,\F)$ and $(J^\G,\mathcal{G})$
are combinatorially equivalent. 
\end{theorem}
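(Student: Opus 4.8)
\textbf{Proof proposal for Theorem \ref{thm1}.}
The plan is to build the order-preserving bijections $h_n\colon\cc_n(\F)\to\cc_n(\G)$ directly out of the itinerary data, using the equality $\V(\F)=\V(\G)$ as the only input. First I would set up a coding of the points of $\cc_n(\F)$. By definition, $x\in\cc_n(\F)$ means $f_n^\ell(x)=c_{n+\ell}^\F$ for some minimal $\ell=\ell(x)\geq0$; the itinerary $I_{\F,n}(x)$ is then eventually determined: its first $\ell$ entries lie in $\{L,R\}$, its $(\ell+1)$-st entry is the symbol $c_{n+\ell}^\F$, and after that the tail is $\sigma^{\ell+1}\V_{n+\ell}^\F$ (the shift of the kneading sequence based at level $n+\ell$), which is intrinsic to $\F$. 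So a point of $\cc_n(\F)$ is completely described by the pair $(\ell,w)$ where $w\in\{L,R\}^\ell$ is its finite address string up to the first hit of a turning point. I would prove that two points of $\cc_n(\F)$ with the same such pair coincide (injectivity of the coding) by an induction on $\ell$ using strict monotonicity of $f_n$ on each lap, and that the assignment $x\mapsto(\ell(x),w(x))$ has the same image set for $\F$ and for $\G$ precisely because $\V(\F)=\V(\G)$ forces the same finite address strings to be "admissible'' at each level.

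The second step is to order these codes. On $\{L,R\}^\ell$ I would use the standard signed-lexicographic (twisted) order that accounts for orientation reversal on the decreasing lap: the sign of a prefix flips each time the symbol $R$ is read, and one compares the first differing coordinate after correcting by the accumulated sign, with the convention that a string ending in the turning-point symbol (shorter $\ell$) sits between its two one-step extensions. The key lemma here, proved by induction on $\ell$ exactly as in Milnor--Thurston \cite{MT}, is that for $x,y\in\cc_n(\F)$ one has $x<y$ in $J^\F$ if and only if $\mathrm{code}_\F(x)\prec\mathrm{code}_\F(y)$ in this twisted order; the induction step compares $i_{\F,n}(x)$ with $i_{\F,n}(y)$, and if they agree passes to $f_n^1(x),f_n^1(y)\in\cc_{n+1}(\F)$, using that $f_n$ is increasing on $[a^\F,c_n^\F)$ and decreasing on $(c_n^\F,b^\F]$. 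Since the twisted order on code strings is defined purely combinatorially and does not see $\F$ versus $\G$, the map
\[
h_n\colon\cc_n(\F)\to\cc_n(\G),\qquad h_n:=\bigl(\mathrm{code}_\G\bigr)^{-1}\circ\mathrm{code}_\F
\]
is a well-defined order-preserving bijection.

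Finally I would check the conjugacy relation $h_{n+1}\circ f_n=g_n\circ h_n$ on $\cc_n(\F)\setminus\{c_n^\F\}$. This is immediate from how the codes transform under one step of the dynamics: if $x\in\cc_n(\F)$ with $x\neq c_n^\F$ and code $(\ell,w)$ (so $\ell\geq1$), then $f_n^1(x)\in\cc_{n+1}(\F)$ has code $(\ell-1,w')$ where $w'$ is $w$ with its first letter deleted, and the identical rule holds for $\G$; hence $\mathrm{code}_\G(f_n(h_n(x)))=\mathrm{code}_\G(g_n(h_n(x)))$, giving the relation after applying $(\mathrm{code}_\G)^{-1}$. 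The main obstacle I anticipate is the second step: verifying that the twisted-order lemma survives in the nonautonomous setting, where at each level one is working on a different interval $J^\F$ with a different turning point $c_n^\F$ and where the laps change from step to step; one must be careful that the induction on $\ell$ only ever appeals to monotonicity of the single map $f_{n+j}$ on its own two laps and never to any global structure, and that the "boundary'' cases (comparisons against points whose code terminates at a turning-point symbol at some level) are handled consistently with the interval convention that $c_{n+j}^\F$ separates $L$ from $R$. Once that lemma is in place, injectivity, surjectivity, order preservation and the conjugacy identity all follow formally.
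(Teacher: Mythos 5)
Your architecture is a genuine alternative to the paper's. The paper (following Rand) runs an induction on the depth $k$: it refines the partition $\pp_n^k(\F)$ of $J^\F$ into maximal monotonicity intervals of $f(n,k)$, uses the kneading equality to decide, lap by lap, whether $c_{n+k}^\F$ is hit by the image of a lap, and extends $h_n^k$ to $h_n^{k+1}$ by matching the unique new preimage in each lap. You instead code every point of $\cc_n(\F)$ by its finite $\{L,R\}$-address up to the first turning-point hit, order the codes by the signed-lexicographic order, and define $h_n=(\mathrm{code}_\G)^{-1}\circ\mathrm{code}_\F$ in one shot. What your route buys is that injectivity, order preservation, and the relation $h_{n+1}\circ f_n=g_n\circ h_n$ (codes shift under one application of $f_n$) become formal consequences of two lemmas; what the paper's route buys is that the place where the hypothesis $\V(\F)=\V(\G)$ is actually consumed is made completely explicit.

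That said, there is one step you dismiss in a clause that carries essentially the entire weight of the theorem: the claim that $\V(\F)=\V(\G)$ forces the sets of admissible codes $(\ell,w)$ at each level $n$ to coincide for $\F$ and $\G$. This is not a tautology. The kneading sequence only records the itineraries of the turning points themselves, whereas admissibility of $w\in\{L,R\}^\ell$ at level $n$ asks whether the cylinder $J_w=\{x: i_{\F,n+m}(f_n^m(x))=w_{m+1},\ 0\leq m\leq \ell-1\}$ is nonempty and whether $f_w(n,\ell)(\overline{J_w})$ contains $c_{n+\ell}^\F$. To read this off from the kneading data you must know that the endpoints of $\overline{J_w}$ lie on turning-point orbits (or are $a^\F,b^\F$), identify \emph{which} orbit points they are, and compare the corresponding entries of $\V_{n+m}^\F$ with $c_{n+\ell}^\F$; and to know the endpoints of $\overline{J_w}$ you must already have matched the admissible strings of length $<\ell$. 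In other words, the admissibility claim requires an induction on $\ell$ that essentially reproduces the paper's induction on $k$ (its statement $\aaa_n^k(\F)=\aaa_n^k(\G)$ together with the interval bookkeeping for $\pp_n^k$). Your proposal is therefore correct in outline, but as written it asserts rather than proves the unique point where the hypothesis does any work; everything you do prove in detail (the twisted-order lemma, the shift compatibility) would hold for any pair of UNDS. A minor further slip: the tail of $I_{\F,n}(x)$ for $x$ with code $(\ell,w)$ is $\V_{n+\ell}^\F$ itself starting at position $\ell$, not $\sigma^{\ell+1}\V_{n+\ell}^\F$; this does not affect the argument.
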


The proof of Theorem \ref{thm1} is inspired by \cite[Thm. 1]{Rand}.
Before proving the theorem, let us introduce some notations.

Given an interval $J$, let $\partial J$ denote its boundary.
For each $k,\ell\geq1$, the functions $f_k$ and $f_k^\ell$
will be denoted by $f(k)$ and $f(k,\ell)$ respectively.
In the same way we denote $g_k$ and $g_k^\ell$ by $g(k)$ and $g(k,\ell)$ respectively.

Fix $n\geq1$. For each $k\geq1$ let $\cc^k_n(\F)=\{x\in J^\F: f(n,\ell)(x)=
c_{n+\ell}^\F\; \textrm{for some}\; 0\leq\ell\leq k-1\}$. Note that
$\cc_n(\F)=\bigcup_{k\geq 1}\cc^k_n(\F)$.
Since $\cc_n^{k+1}(\F)=\cc_n^k(\F)\cup f(n,k)^{-1}(c_{n+k}^\F)$,
we have that $\cc_n^k(\F)\subset\cc_n^{k+1}(\F)$. Let
$\pp_n^k(\F)=\{J_n^\F(\ell)\subset J^\F:\; \partial J_n^\F(\ell)
\subset\cc_n^{k}(\F)\cup\{a^\F,b^\F\}
\; \textrm{and}\; 1\leq\ell\leq\theta^k_n(\F)\}$, 
where the increasing order of the index of 
$J_n^\F(\ell)$ is the same order as the intervals are placed in $J^\F$
and $\theta^k_n(\F):=\#\pp_n^{k}(\F)$.
By definition, $J_n^\F(\ell)$ is a maximal
monotonicity closed interval of $f(n,k)$ for all $1\leq\ell\leq \theta^k_n(\F)\}$.

We denote $f(k)\restriction_{[a^\F,c_k^\F]}$ and 
$f(k)\restriction_{[c_k^\F,b^\F]}$
by $f_-(k)$ and $f_+(k)$ respectively. Furthermore, given a sequence
$j=j_1j_2\ldots j_k \in\{-, +\}^k$ put
\[ 
f_j(n,k):=f_{j_k}(n+k-1)\circ\cdots\circ f_{j_2}(n+1)\circ f_{j_1}(n).
\]
Given $x\in\cc_n^k(\F)\backslash\{c_n^\F\}$ we 
consider $j=j_1j_2\ldots j_\ell \in\{-, +\}^\ell$, with $1\leq\ell\leq k-1$,
the minimal sequence such that $f_j(n,\ell)(x)=c_{n+\ell}^\F$.
We denote $x$ by $x_j^\F(n)$. Consider the set 
\[
\aaa_n^k(\F):=\biggl\{j=j_1\ldots j_\ell\in\{-, +\}^\ell:  
\begin{array}{c}
\exists x\in\cc_n^k(\F)\textrm{ such that }\\
x=x^\F_j(n)\textrm{ for some }0\leq\ell\leq k-1
\end{array}
\biggr\},
\]
where $c_n^\F=x_j^\F(n)$ when $j=\emptyset$ ($\ell=0$). Thus
\[
\cc^k_n(\F)=\left\{x_j^\F(n):\; j\in\{-,+\}^\ell\; \textrm{with}\; 0\leq\ell\leq k-1\right\}.
\]

Finally, note that for each $J_n^\F(\ell)\in\pp_{n}^k(\F)$ there is a unique sequence
$j=j_1\ldots j_k\in\{-,+\}^k$ such that 
$f_{j}(n,k)$ is strictly monotone on $J_n^\F(\ell)$. So 
\[
\partial J_n^\F(\ell)\in\left\{a^\F, b^\F, c_n^\F, x^\F_{j_1\ldots j_m}(n)\right\},
\]
for $1\leq m\leq k-1$. 

In the same way we can define $\cc_{n}^k(\G)$, $\pp_{n}^k(\G)$ and $\aaa_n^k(\G)$.

So the main idea in the proof is to show that for each $k\geq1$ there exists a strictly 
increasing bijection map $h_n^k:\cc_n^k(\F)\to\cc_n^k(\G)$ (see Figure \ref{fig1}) such that:
\begin{enumerate}[$\circ$]
\item $h_{n+1}^k\circ f(n)=g(n)\circ h_n^k$ on $\cc_n^k(\F)\backslash\{c_n^\F\}$,
\item $h_n^k\restriction_{\cc_n^{k-1}(\F)}=h_n^{k-1}$.
\end{enumerate}

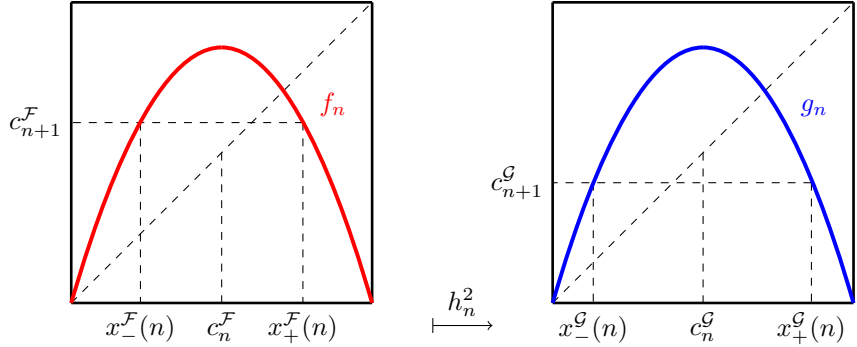
\begin{figure}[H]\centering
\begin{tikzpicture}[scale=2]
      \draw[line width=1pt] (1,-1) -- (-1,-1); 
			\draw[line width=1pt] (1,1) -- (1,-1); 
			\draw[line width=1pt] (1,1) -- (-1,1); 
			\draw[line width=1pt] (-1,1) -- (-1,-1); 
			\draw[|->] (1.4,-1.15) -- (1.8,-1.15) ;
			\fill[black] (1.6,-1.15) circle (0.mm) node[above] {$h_n^2$};
			\draw[dashed] (0.54,0.2) -- (-1,0.2) node[left]{$c_{n+1}^\F$};
      \draw[dashed] (0,0) -- (0,-1) node[below] {$c_n^\F$};
			\draw[dashed] (-1,-1) -- (1,1);
			\draw[dashed] (-0.54,0.2) -- (-0.54,-1) node[below]{$x_-^\F(n)$};
			\draw[dashed] (0.54,0.2) -- (0.54,-1) node[below]{$x_+^\F(n)$};
  \draw[color=red,line width=1.5pt] (0,0.7) parabola (-1,-1) ;
	\draw[color=red,line width=1.5pt] (0,0.7) parabola (1,-1) ;
	\fill[red,thick] (0.74,0.18) circle (0.mm) node[above] {$f_n$};
	
      \draw[line width=1pt] (4.2,-1) -- (2.2,-1); 
			\draw[line width=1pt] (4.2,1) -- (4.2,-1); 
			\draw[line width=1pt] (4.2,1) -- (2.2,1); 
			\draw[line width=1pt] (2.2,1) -- (2.2,-1); 
			\draw[dashed] (2.2,-1) -- (4.2,1);
			\draw[dashed] (3.92,-0.2) -- (2.2,-0.2) node[left]{$c_{n+1}^\G$};
      \draw[dashed] (3.2,0) -- (3.2,-1) node[below] {$c_n^\G$};
			\draw[dashed] (2.47,-0.2) -- (2.47,-1) node[below]{$x_-^\G(n)$};
			\draw[dashed] (3.92,-0.2) -- (3.92,-1) node[below]{$x_+^\G(n)$};  
			\draw[color=blue,line width=1.5pt] (3.2,0.7) parabola (2.2,-1) ;
	    \draw[color=blue,line width=1.5pt] (3.2,0.7) parabola (4.2,-1) ;
  \fill[blue,thick] (3.94,0.18) circle (0.mm) node[above] {$g_n$};
\end{tikzpicture}
\caption{Construction of $h_n^2$.}
\label{fig1}
\end{figure}

We proceed by induction on $k$. 

\begin{proof}[Proof of Theorem \ref{thm1}.]
Fix $n\geq1$. It is clear that we can define $h_n^1:\cc_n^1(\F)\to\cc_n^1(\G)$ 
as $h_n^1(c_n^\F):=c_n^\G$. Suppose by induction that there exists
$h_n^k:\cc_n^k(\F)\to\cc_n^k(\G)$ strictly increasing bijection such that 
$h_{n+1}^k\circ f(n)=g(n)\circ h_n^k$ on $\cc_n^k(\F)\backslash\{c_n^\F\}$
and $h_n^k\restriction_{\cc_n^{k-1}(\F)}=h_n^{k-1}$. This implies that
$\#\pp_{n}^k(\F)=\#\pp_{n}^k(\G)$ and $\aaa_n^k(\F)=\aaa_n^k(\G)$.

Let $J_n^\F(i)\in\pp_{n}^k(\F)$ and
$j=j_1\ldots j_k\in\{-,+\}^k$ such that
\[
J_n^\F(i)=\left[ x_{j_1\ldots j_m}^\F(n), x_{j_1\ldots j_\ell}^\F(n) \right],
\]
with $1\leq m\neq\ell\leq k-1$. By the manner that we order the intervals 
on $\pp_{n}^k(\F)$ and $\pp_{n}^k(\G)$ we have 
\[
h_n^k(x_{j_1\ldots j_{m}}^\F(n))=x_{j_1\ldots j_{m}}^\G(n)  \; \textrm{and} \;
h_n^k(x_{j_1\ldots j_{\ell}}^\F(n))=x_{j_1\ldots j_{\ell}}^\G(n),
\]
where 
\[
J_n^\G(i)=\left[ x^\G_{j_1\ldots j_m}(n), x_{j_1\ldots j_\ell}^\G(n) \right].
\]
Furthermore,
\begin{eqnarray*}
f_j(n,k)(J_n^\F(i))&=&[f_{j_k}(n+k-1)\circ\cdots\circ f_{j_{m+1}}(n+m)(c_{n+m}^\F), \\
                   & & \qquad \qquad \qquad \qquad 
										   f_{j_k}(n+k-1)\circ\cdots\circ f_{j_{\ell+1}}(n+\ell)(c_{n+\ell}^\F)]
\end{eqnarray*}
and
\begin{eqnarray*}
g_j(n,k)(J_n^\G(i))&=&[g_{j_k}(n+k-1)\circ\cdots\circ g_{j_{m+1}}(n+m)(c_{n+m}^\G), \\
                   & & \qquad \qquad \qquad \qquad 
										   g_{j_k}(n+k-1)\circ\cdots\circ g_{j_{\ell+1}}(n+\ell)(c_{n+\ell}^\G)].
\end{eqnarray*}

Since $\V(\F)=\V(\G)$, 
\[
f_j(n,k)(J_n^\F(i))\cap\{c_{n+k}^\F\}\neq\emptyset
\]
if, and only if
\[
g_j(n,k)(J_n^\G(i))\cap\{c_{n+k}^\G\}\neq\emptyset.
\]

If $f_j(n,k)(J_n^\F(i))\cap\{c_{n+k}^\F\}=\emptyset$, then
$J_n^\F(i)\in\pp_n^{k+1}(\F)$.
On the other hand, if $\partial\left(f_j(n,k)(J_n^\F(i))\right)\cap\{c_{n+k}^\F\}\neq\emptyset$
we get $J_n^\F(i)\in\pp_n^{k+1}(\F)$.
In any case, we also have $J_n^k(\G)\in\pp_n^{k+1}(\G)$.
Thus, $x_{j_1\ldots j_m}^\F(n)$ and $x_{j_1\ldots j_\ell}^\F(n)$ are the only points
of $\cc_n^{k+1}(\F)$ on $J_n^\F(i)$. The same thing holds on $J_n^\G(i)$. 

Now, if $\textrm{int}\left(f_j(n,k)(J_n^\F(i))\right)\cap\{c_{n+k}^\F\}\neq\emptyset$,
then there exists a unique 
\[
x_j^\F(n):=f_j(n,k)^{-1}(c_{n+k}^\F)\in\textrm{int}(J_n^\F(i)). 
\]
Hence, there also exists a unique
\[
x_j^\G(n):=g_j(n,k)^{-1}(c_{n+k}^\G)\in\textrm{int}(J_n^\G(i)).
\]
We define
\[ 
h_n^{k+1}(x_j^\F(n)):=x_j^\G(n),
\]
where $x_j^\F(n)\in\cc_n^{k+1}(\F)\backslash\cc_{n}^k(\F)$
and $x_j^\G(n)\in\cc_n^{k+1}(\G)\backslash\cc_{n}^k(\G)$.
Moreover, if $x\in\cc_n^k(\F)$, then we put $h_n^{k+1}(x)=h_n^k(x)$.

The cases where  $J_n^\F(i)=[a^\F, x_{j_1\ldots j_\ell}^\F(n)]$,
$J_n^\F(i)=[x^\F_{j_1\ldots j_\ell}(n), b^\F]$,
or $J_n^\F(i)=[x^\F_{j_1\ldots j_\ell}(n), c_n^\F]$, with $1\leq\ell\leq k-1$, are similar. Thus
we have that $h_n^{k+1}:\cc_n^{k+1}(\F)\to\cc_n^{k+1}(\G)$ is
a strictly increasing bijection such that:
\begin{enumerate}[$\circ$]
\item $h_{n+1}^{k+1}\circ f(n)=g(n)\circ h_n^{k+1}$ on $\cc_n^{k+1}(\F)\backslash\{c_n^\F\}$, and
\item $h_n^{k+1}\restriction_{\cc_n^k(\F)}=h_n^k$.
\end{enumerate}

Therefore, for all $n\geq1$ we can define
\[
\begin{array}{cccl}
h_n \ : & \! \cc_n(\F) & \! \longrightarrow  & \! \cc_n(\G) \\
        & \! w & \! \longmapsto      & \! h_n(w)=h_n^k(w),
\end{array}
\]
where $k$ is such that $w\in\cc_n^k(\F)$. Note that
$h_{n+1}\circ f(n)=g(n)\circ h_n$ on $\cc_n(\F)\backslash\{c_n^\F\}$.
This concludes the proof that $(J^\F,\F)$ is combinatorially equivalent
to $(J^\mathcal{G},\mathcal{G})$.
\end{proof}


\section{Topological conjugacy for UNDS}

Once that combinatorial equivalence has been established by the equivalence 
between kneading sequences, our goal now is to extend each bijection $h_n$
to the entire interval $J^\F$ to get a topological conjugacy. 

As a consequence of Theorem \ref{thm1}, we have the following

\begin{proposition}\label{prop2}
Let $(J^\F,\F)$ and $(J^\G,\G)$ be two UNDS with
kneading sequences $\V(\F)$ and $\V(\mathcal{G})$. 
Assume that $\cc_n(\mathcal{G})$ is dense in $J^\G$ for all $n\geq1$.
If $\V(\F)=\V(\G)$, then $(J^\F,\F)$ and $(J^\G,\mathcal{G})$
are combinatorially equivalent and $h_n:\cc_n(\F)\to\cc_n(\G)$ can be extended
continuously to $J^\F$ as an order preserving continuous surjective map 
such that $h_{n+1}\circ f_n=g_n\circ h_n$ for each $n\geq1$.
\end{proposition}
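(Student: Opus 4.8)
The plan is to take the combinatorial equivalence $h_n:\cc_n(\F)\to\cc_n(\G)$ supplied by Theorem~\ref{thm1} and extend it to all of $J^\F$ by the usual ``monotone limit'' procedure, exploiting that $\cc_n(\G)$ is dense in $J^\G$. First I would record the two structural facts about $h_n$ that are immediate from the inductive construction in Theorem~\ref{thm1}: it is strictly increasing on $\cc_n(\F)$, and it maps $\partial J^\F=\{a^\F,b^\F\}$-adjacent points correctly in the sense that $\inf \cc_n(\F)$ and $\sup\cc_n(\F)$ are sent toward $a^\F,b^\F$. For $x\in J^\F$ define $h_n(x):=\sup\{h_n(w):w\in\cc_n(\F),\ w\le x\}$ (with the convention that the extension fixes the endpoints $a^\F\mapsto a^\G$, $b^\F\mapsto b^\G$, which is consistent because $f_n(a^\F)=f_n(b^\F)=a^\F$ forces $a^\G$, $b^\G$ to play the analogous role). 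Monotonicity of $h_n$ on $\cc_n(\F)$ makes this $\sup$ well-defined and the extended map nondecreasing on $J^\F$.

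Next I would prove continuity and surjectivity of the extension. Surjectivity onto $J^\G$ is where density of $\cc_n(\G)$ enters: the image $h_n(\cc_n(\F))=\cc_n(\G)$ is dense in $J^\G$, so the nondecreasing extension has dense image, and a nondecreasing map of an interval onto a dense subset of an interval, together with preservation of endpoints, is automatically surjective and continuous — a monotone function can only fail to be continuous by having a jump, but a jump would leave an open subinterval of $J^\G$ disjoint from the image, contradicting density. (One should note here that I do \emph{not} claim $h_n$ is injective — Proposition~\ref{prop2} only asserts a semi-conjugacy, i.e.\ an order-preserving continuous surjection — so possible ``flat pieces'' of the extended $h_n$ are allowed; injectivity is exactly what the limit property in Theorem~\ref{thmA} is there to rule out.) Order preservation in the weak (nondecreasing) sense is built into the construction; that is all that is required for a semi-conjugacy of interval maps.

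Finally I would verify the intertwining relation $h_{n+1}\circ f_n=g_n\circ h_n$ on all of $J^\F$. On $\cc_n(\F)\setminus\{c_n^\F\}$ this holds by Theorem~\ref{thm1}; at $x=c_n^\F$ both sides equal $a^\G$-path data determined by $\V(\F)=\V(\G)$, but the cleanest argument is the density/continuity one: both $h_{n+1}\circ f_n$ and $g_n\circ h_n$ are continuous on $J^\F$ (composition of continuous maps, using the extension just built together with continuity of $f_n$ and $g_n$), and they agree on the dense set $\cc_n(\F)\setminus\{c_n^\F\}$ — density of $\cc_n(\F)$ in $J^\F$ is automatic once $\cc_n(\G)$ is dense in $J^\G$ and $h_n$ is the monotone extension, since $\cc_n(\F)=h_n^{-1}(\cc_n(\G))$ up to the flat pieces; more directly one observes $\cc_n(\F)\supset c_n^\F$ and $\cc_n^k(\F)$ is obtained by adjoining preimages, and a short argument shows it is dense whenever the $\G$-side is. Two continuous maps agreeing on a dense set agree everywhere, giving the relation on $J^\F$.

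The main obstacle I anticipate is not any single step but rather the bookkeeping needed to guarantee that $\cc_n(\F)$ is itself dense in $J^\F$: the hypothesis is phrased only on the $\mathcal G$-side, and one must use the combinatorial equivalence (the bijections $h_n^k$ and the equality $\theta^k_n(\F)=\theta^k_n(\G)$ of partition sizes established inside the proof of Theorem~\ref{thm1}) to transport density back through $h_n^{-1}$. Concretely, if some open interval $U\subset J^\F$ met no $\cc_n^k(\F)$, then $U$ would sit inside a single $J_n^\F(\ell)$ for every $k$, hence $f_j(n,k)(U)$ would be an interval avoiding $c_{n+k}^\F$ for all $k$; the matching $\G$-side intervals $J_n^\G(\ell)$ would then also stabilize, but that contradicts density of $\cc_n(\G)$ in $J^\G$ once one checks the images $J_n^\G(\ell)$ shrink to points. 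Handling this diameter-shrinking cleanly — essentially an argument that nested maximal monotonicity intervals cannot have length bounded below on both sides simultaneously — is the delicate point, and is presumably exactly where the ``limit property'' of Theorem~\ref{thmA} is really being used in disguise; for Proposition~\ref{prop2} as stated one gets away with the weaker density hypothesis.
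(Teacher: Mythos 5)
Your extension construction is sound and matches the paper's: the monotone-limit extension, with continuity and surjectivity forced by density of $\cc_n(\G)$ in $J^\G$ and with flat pieces permitted on the gaps of $\overline{\cc_n(\F)}$, is exactly what the paper does. The gap is in your verification of $h_{n+1}\circ f_n=g_n\circ h_n$: you reduce it to "two continuous maps agreeing on a dense set," and for that you assert that $\cc_n(\F)$ is dense in $J^\F$. That density is not a hypothesis, does not follow from density of $\cc_n(\G)$ plus $\V(\F)=\V(\G)$, and is false in general --- the whole reason the proposition delivers only an order-preserving surjection rather than a homeomorphism is that $h_n$ may collapse nondegenerate components of $J^\F\setminus\overline{\cc_n(\F)}$ to points (the paper's remark immediately after the proposition makes density of $\cc_n(\F)$ an \emph{additional} assumption needed to upgrade $h_n$ to a homeomorphism; compare the classical Milnor--Thurston situation of a unimodal map with a wandering interval semi-conjugated to a tent map with the same kneading sequence). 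Your fallback argument for density --- that if an open $U\subset J^\F$ stayed inside a single $J_n^\F(\ell)$ for all $k$ then the matching intervals $J_n^\G(\ell)$ would "stabilize," contradicting density of $\cc_n(\G)$ --- does not work: the correspondence between $\pp_n^k(\F)$ and $\pp_n^k(\G)$ is purely combinatorial and gives no metric control on the $\F$-side; the $\G$-side intervals can shrink to points while the corresponding $\F$-side intervals do not.

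The missing step is the one the paper supplies: on a connected component $J=(\alpha,\beta)$ of $J^\F\setminus\overline{\cc_n(\F)}$, observe that $c_n^\F\notin J$ and that $f_n(J)$ meets no point of $\cc_{n+1}(\F)$ (a point of $J$ mapping into $\cc_{n+1}(\F)$ would itself lie in $\cc_n(\F)$), so $f_n(J)$ sits inside the closure of a component of $J^\F\setminus\overline{\cc_{n+1}(\F)}$, where $h_{n+1}$ is constant; since $h_n$ is constant equal to $h_n(\alpha)$ on $J$ and the identity $h_{n+1}(f_n(\alpha))=g_n(h_n(\alpha))$ holds at the endpoint $\alpha\in\overline{\cc_n(\F)\setminus\{c_n^\F\}}\cup\{a^\F,b^\F\}$, the relation propagates across $J$. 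A separate short check is then needed at $w=c_n^\F$ when $c_n^\F\notin\overline{\cc_n(\F)\setminus\{c_n^\F\}}$. Your proof needs this case analysis (or some substitute) to be complete; the density-of-$\cc_n(\F)$ shortcut cannot be repaired under the stated hypotheses.
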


\begin{proof}
We only need to prove that each $h_n$ can be extended to $J^\F$. 
Fix $n\geq1$ and let $h_n:\cc_n(\F)\to\cc_n(\G)$.
We claim that we can extend continuously $h_n$ to $\overline{\cc_n(\F)}$.
In fact, take $w\in\overline{\cc_n(\F)}\backslash\cc_n(\F)$. Suppose that there
exist $w_k^j\in\cc_n(\F)$, $j=1,2$, such that $w_k^1\uparrow w$
and $w_k^2\downarrow w$. The cases where we have only $w_k\uparrow w$ or 
$w_k\downarrow w$, with $w_k\in\cc_n(\F)$, are similar.
Since $h_n$ is strictly increasing there are unique
\[
h_n^1(w):=\displaystyle\lim_k h_n(w_k^1) \ \ \textrm{and} \ \
h_n^2(w):=\displaystyle\lim_k h_n(w_k^2). 
\]
Note that $h_n^j(w)$ does not depend on the sequence $w^j_k$ converging to $w$, and 
$h_n^1(w)\leq h_n^2(w)$. Since $\cc_n(\G)$ is dense in $J^\G$ and $h_n$ is a strictly monotone map
we get $h_n^1(w)=h_n^2(w)$. 
Therefore, for $w\in\overline{\cc_n(\F)}\backslash\cc_n(\F)$ we can define
\[
h_n(w):=\displaystyle\lim_k h_n(w_k),
\]
where $w_k\in\cc_n(\F)$ is some (any) sequence such that $w_k\to w$.
Whence $h_n:\overline{\cc_n(\F)}\to J^\G$ is a strictly increasing continuous  
surjective map. 

Now, let $J=(\alpha, \beta)$ be a connected component
of $J^\F\backslash\overline{\cc_n(\F)}$. Once $h_n(\alpha)=h_n(\beta)$,
we can extend $h_n$ to $J$ as $h_n(w):=h_n(\alpha)$ for all $w\in J$.
Observe that $h_n(a^\F)=a^\F$ and $h_n(b^\F)=b^\F$.

\medskip
\noindent
{\sc Claim:} $h_{n+1}\circ f_n=g_n\circ h_n$ on $J^\F$ for all $n\geq1$.

\medskip
\noindent
{\em Proof of the claim.} From Theorem \ref{thm1}, we have 
$h_{n+1}\circ f_n=g_n\circ h_n$ on $\cc_n(\F)\backslash\{c_n^\F\}$.
Take $w\in J^\F\backslash\cc_n(\F)$ so that there is a sequence 
$(w_k)_{k\geq1}\in\cc_n(\F)\backslash\{c_n^\F\}$ satisfying $\displaystyle\lim_k w_k=w$. Thus, 
\[
(h_{n+1}\circ f_n)(w)=\displaystyle\lim_k (h_{n+1}\circ f_n)(w_k)
=\displaystyle\lim_k (g_n\circ h_n)(w_k)=(g_n\circ h_n)(w).
\]
Let $J=(\alpha,\beta)$ be a connected component of $J^\F\backslash\overline{\cc_n(\F)}$.
Observe that $f_n(J)$ also is a connected component of $J^\F\backslash\overline{\cc_{n+1}(\F)}$.
Without loss of generality we may assume that 
$\alpha\in\overline{\cc_n(\F)\backslash\{c_n^\F\}}$ or $\alpha\in\{a^\F, b^\F\}$,
%
%
and so $h_{n+1}(f_n(\alpha))=g_n(h_n(\alpha))$. Thus
\[ 
h_{n+1}(f_n(w))=h_{n+1}(f_n(\alpha))=g_n(h_n(\alpha))=g_n(h_n(w)),
\]
%
%
for all $w\in J$. It remains to prove that $(h_{n+1}\circ f_n)(c_n^\F)=(g_n\circ h_n)(c_n^\F)$.
If $c_n^\F\in\overline{\cc_n(\F)\backslash\{c_n^\F\}}$, then equality 
follows from the first part. Otherwise, there exists $\alpha\in J^\F$ so that
$(\alpha, c_n^\F)\subset J^\F\backslash\overline{\cc_n(\F)}$ and 
$\alpha\in\overline{\cc_n(\F)\backslash\{c_n^\F\}}$ or $\alpha\in\{a^\F, b^\F\}$,
then equality follows from the second one. This proves the claim.
%
%

\medskip
The proof of the proposition is finished.
\end{proof}

Notice that if we also assume that $\cc_n(\F)$ is dense in $J^\F$ for all 
$n\geq1$ on the proposition above, then $h_n:J^\F\to J^\G$ is a
homeomorphism for all $n\geq1$.  

\medskip
\noindent 
{\sc Limit Property:} Let $(J^\F, \F)$ be a UNDS. We say that $(J^\F,\F)$ satisfies the 
{\em limit property} if there is a unimodal map $f:J^\F\to J^\F$ 
so that the turning point $c^f$ of $f$ is not periodic, and such that: 
\begin{enumerate}[$\circ$]
\item $f_n$ converges uniformly to $f$;
\item $\cc_n(\F)$ is dense in $J^\F$, $\forall n\geq1$;
\item $\cc(f)$ is dense in $J^\F$.
\end{enumerate}

\medskip

Now we are ready to prove Theorem \ref{thmA}.

\begin{proof}[Proof of Theorem \ref{thmA}.]
Because of Proposition \ref{prop1} we only need to prove the reverse 
implication. From Proposition \ref{prop2}, it remains to prove that the 
families $(h_n)$ and $(h_n^{-1})$ are equicontinuous. By symmetry, it is enough 
to prove that $(h_n)$ is equicontinuous. We will use the same notation 
to the one used in the proof of Theorem \ref{thm1}. Let $\varepsilon>0$.
Since $(J^\G,\G)$ has the limit property, there exist a unimodal map 
$g:J^\G\to J^\G$ and $k\geq1$ so that $\cc^k(g)$ is 
$\tfrac{\varepsilon}{8}$--dense in $J^\G$. Since $c^g$ is not periodic, for each $m=1, \ldots, k-1$
we have that if $x_\alpha(g)$ and $x_\beta(g)$ are two consecutive elements 
of $\cc^m(g)$ on $J^\G$, then either $c^g\notin[g^m(x_\alpha(g)),g^m(x_\beta(g))]$
or $c^g\in(g^m(x_\alpha(g)),g^m(x_\beta(g)))$. 
Therefore, using that $g(n+m-1)\circ\cdots\circ g(n)$ converges uniformly 
to $g^m$ for each $m\geq1$
, and proceeding as \cite[Lemma 3.1]{EA}, 
there is $N_1\geq1$ (see Figure \ref{fig2}) such that $\aaa_n^k(\G)=\aaa^k(g)$ and
\[
d(x_j(g),x_j^\G(n))<\tfrac{\varepsilon}{8} \; \text{for each}\; j\in\aaa^k(g) \;\text{and}\; n\geq N_1.
\]
This implies that $\cc_n^k(\G)$ is $\tfrac{\varepsilon}{4}$--dense in $J^\G$ for all $n\geq N_1$.

\begin{figure}[H]\centering
\begin{tikzpicture}[scale=1.4]
      \draw[line width=1pt] (2,-2) -- (-2,-2); 
			\draw[line width=1pt] (2,2) -- (2,-2); 
			\draw[line width=1pt] (2,2) -- (-2,2); 
			\draw[line width=1pt] (-2,2) -- (-2,-2); 
			\draw[dashed] (-1.1,-0.8) -- (-1.1,-2); 
			\fill[black] (-1.15,-2) circle (0.mm) node[below]{$a$};
			\draw[dashed] (-1,-0.7) -- (-1,-2);
			\fill[black] (-1,-1.95) circle (0.mm) node[below]{$b$};
			\draw[dashed] (0.4,1.2) -- (0.4,-2); 
			\fill[black] (0.35,-2) circle (0.mm) node[below]{$c$};
      \draw[dashed] (0.5,1.2) -- (0.5,-2);
			\fill[black] (0.5,-1.95) circle (0.mm) node[below]{$d$};
			\draw[dashed] (-2,0.2) -- (-0.6,0.2);
			\draw[dashed] (-2,0.3) -- (-0.45,0.3);
			\draw[dashed] (-0.58,0.2) -- (-0.58,-2);
			\fill[black] (-0.57,-2) circle (0.mm) node[below]{$e$};
			\draw[dashed] (-0.45,0.3) -- (-0.45,-2);
			\fill[black] (-0.45,-1.95) circle (0.mm) node[below]{$f$};
			\coordinate (A) at (-1.1,-0.8);
      \coordinate (B) at (0.4,1.2);
	    \draw[color=blue,line width=1.5pt]  (A) to [bend left=15] (B);
			\coordinate (C) at (-1,-0.7);
      \coordinate (D) at (0.5,1.2);
	    \draw[color=red,line width=1.5pt]  (C) to [bend left=15] (D);
			\fill[black] (-2,0.1) circle (0.mm) node[left]{$c^g$};
			\fill[black] (-2,0.4) circle (0.mm) node[left]{$c_{n+m}^\G$};
			\fill[black] (0.7,1.8) circle (0.mm) node[right]{$a=x_\alpha(g)$};
			\fill[black] (0.7,1.4) circle (0.mm) node[right]{$b=x_\alpha^\G(n)$};
			\fill[black] (0.7,1) circle (0.mm) node[right]{$c=x_\beta(g)$};
			\fill[black] (0.7,0.6) circle (0.mm) node[right]{$d=x_\beta^\G(n)$};
			\fill[black] (0.7,0.2) circle (0.mm) node[right]{$e=x_j(g)$};
			\fill[black] (0.7,-0.2) circle (0.mm) node[right]{$f=x_j^\G(n)$};
			\fill[black] (0.6,-0.6) circle (0.mm) node[right]{$d(e,f)<\tfrac{\varepsilon}{8}$};
			\fill[blue] (-1.8,1.8) circle (0.mm) node[right]{$g^m$}; 
			\fill[black] (-1.5,1.8) circle (0.mm) node[right]{,\;$1\leq m\leq k-1$};
			\fill[red] (-1.8,1.4) circle (0.mm) node[right]{$g(n,m)$};
			\fill[black] (-1,1.4) circle (0.mm) node[right]{,\;$n\geq N_1$};
 
\end{tikzpicture}
\caption{Case $c^g\in(g^m(x_\alpha(g)),g^m(x_\beta(g)))$. In the other
case, $x_j(g)$ and $x_j^\G(n)$ do not exist.}
\label{fig2}
\end{figure}
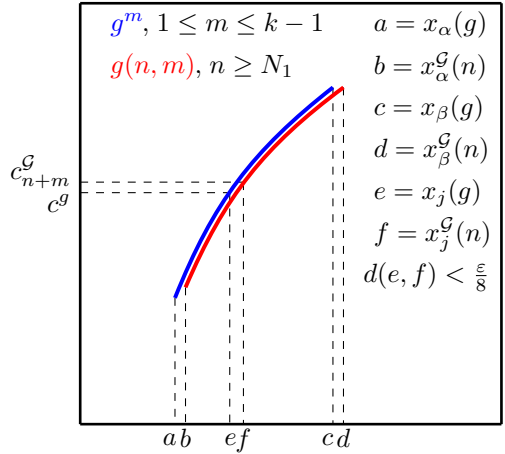

\medskip
\noindent
{\sc Claim 1:} For any $n\geq N_1$, let $x_j^\G(n)$ and $x_{\ell}^\G(n)$ be two consecutive elements
of $\cc_n^k(\G)$ on $J^\G$. Then $d(x_j^\G(n), x_{\ell}^\G(n))<\tfrac{\varepsilon}{2}$.

\medskip
\noindent
{\em Proof of the claim.} Suppose by contradiction this claim is false.
Let $x$ be the midpoint of the interval $[x_j^\G(n), x_\ell^\G(n)]$.
Hence $d(x, x_j^\G(n))\geq\tfrac{\varepsilon}{4}$ and $d(x, x_{\ell}^\G(n))\geq\tfrac{\varepsilon}{4}$.
This contradicts the $\tfrac{\varepsilon}{4}$--density of $\cc_n^k(\G)$ 
and finishes the proof of the claim.   

\medskip
By the limit property of $(J^\F,\F)$ and $\V(\F)=\V(\G)$, there exists $N_2\geq1$
such that $\aaa_n^k(\F)=\aaa^k(f)=\aaa^k(g)$ and
\[
d(x_j(f),x_j^\F(n))<\tfrac{M}{4} \; \text{for each}\; j\in\aaa^k(f) \;\text{and}\; n\geq N_2,
\]
where
\[
M=\displaystyle\min_{\substack{j,\ell\in\aaa^k(f)\\j\neq\ell}}\left\{d(x_j(f), x_{\ell}(f)),
d(x_j(f), a^\F), d(x_j(f), b^\F)\right\}.
\]

\medskip
\noindent
{\sc Claim 2:} For any $n\geq N_2$, let $x_j^\F(n)$ and $x_{\ell}^\F(n)$ be two consecutive elements
of $\cc_n^k(\F)$ on $J^\F$. Then $d(x_j^\F(n), x_{\ell}^\F(n))\geq\tfrac{M}{2}$.

\medskip
\noindent
{\em Proof of the claim.} Suppose that $\alpha,\beta\in\aaa^k(f)$ 
are such that $x_\alpha(f)$ and $x_\beta(f)$ are consecutive on $J^\F$ and 
$d(x_\alpha(f), x_\beta(f))=M$. The other possibilities $d(x_\alpha(f), a^\F)=M$,  
and $d(x_\alpha(f), b^\F)=M$, are treated similarly. 
For any two consecutive elements $x_j^\F(n)$ and $x_{\ell}^\F(n)$ of $\cc_n^k(\F)$
we have that $d(x_j^\F(n), x_{\ell}^\F(n))\geq d(x_j(f), x_\ell(f))-2\tfrac{M}{4}$.
Hence $d(x_j^\F(n), x_{\ell}^\F(n))\geq d(x_\alpha(f), x_\beta(f))-2\tfrac{M}{4}=\tfrac{M}{2}$   
and the proof of the claim is finished.

\medskip
Take $0<\delta_1<\tfrac{M}{2}$ and $N=\max\{N_1, N_2\}$. Let $x,y\in J^\F$ 
such that $d(x, y)<\delta_1$ and $n\geq N$. Without loss of generality,
we may assume that $x\leq y$. We have two cases to consider:  
\begin{enumerate}[$\circ $]
\item There are $x_\alpha^\F(n)$ and $x_\beta^\F(n)$ two consecutive elements
of $\cc_n^k(\F)$ so that $x,y\in[x_\alpha^\F(n), x_\beta^\F(n)]$. By construction
of $h_n$, $x_\alpha^\G(n)$ and $x_\beta^\G(n)$ are two consecutive elements
of $\cc_n^k(\G)$ and $h_n(x),h_n(y)\in[x_\alpha^\G(n), x_\beta^\G(n)]$.
Claim $1$ implies that $d(h_n(x), h_n(y))<\tfrac{\varepsilon}{2}$.
\item There are $x_\alpha^\F(n)$, $x_\beta^\F(n)$ and $x_\gamma^\F(n)$ three
consecutive elements of $\cc_n^k(\F)$ so that $x\in[x_\alpha^\F(n), x_\beta^\F(n)]$
and $y\in[x_\beta^\F(n), x_\gamma^\F(n)]$ (Claim $2$). Again by construction of $h_n$,
$x_\alpha^\G(n)$, $x_\beta^\G(n)$ and $x_\gamma^\G(n)$ are three consecutive elements
of $\cc_n^k(\G)$ and $h_n(x)\in[x_\alpha^\G(n), x_\beta^\G(n)]$ and
$h_n(y)\in[x_\beta^\G(n), x_\gamma^\G(n)]$. Consequently $d(h_n(x), h_n(y))<\varepsilon$.
\end{enumerate} 

Now, by continuity there exists $\delta_2>0$ such that $x,y\in J^\F$ with
$d(x,y)<\delta_2$ implies $d(h_n(x),h_n(y))<\varepsilon$ for each $n=1, 2, \ldots, N-1$.
Hence taking $\delta=\min\{\delta_1,\delta_2\}$ the theorem follows.
\end{proof}

Notice that the same proof above can be used to prove a variant of Theorem \ref{thmA}: 
Let $(J^\F,\F)$ and $(J^\G,\G)$ be two unimodal nonautonomous
discrete dynamical systems, and assume that
$(J^\G,\G)$ satisfies the limit property. If $(J^\F,\F)$ and $(J^\G,\G)$
have the same kneading sequences and there exists a unimodal map $f:J^\F\to J^\F$
such that the turning point of $f$ is not periodic, $\overline{\cc(f)}=J^\F$ and $f_n\to f$,
then they are topologically semi-conjugate.


\begin{example}
For each $n\geq1$, let $f_n,g_n,q_n:[0,1]\to[0,1]$ be three unimodal maps defined by
$$
f_n(x)= 
\left\{
\begin{array}{ll}
(\frac{2n+4}{4+n})x  &\textrm{, if } x\in[0, \frac{4+n}{2n+4}] \\
(\frac{2n+4}{n})(1-x)&\textrm{, if } x\in[\frac{4+n}{2n+4},1]
\end{array}
\right.,
$$
$$
g_n(x)= \left\{
\begin{array}{ll}
(\frac{4n+4}{3n-1})x   &\textrm{, if } x\in[0, \frac{3n-1}{4n+4}] \\
(\frac{4n+4}{n+5})(1-x)&\textrm{, if } x\in[\frac{3n-1}{4n+4}, 1] 
\end{array}
\right.,
$$
$$ 
q_n(x)= \left\{
\begin{array}{ll}
2(\frac{n+4}{n+5})x   &\textrm{, if } x\in[0, \frac{1}{2}] \\
2(\frac{n+4}{n+5})(1-x)&\textrm{, if } x\in[\frac{1}{2}, 1]. 
\end{array}
\right.
$$
Observe that $f_n, q_n$ converges uniformly to $f$ and $g_n$ converges uniformly to
$g$, where
$$
f(x)= \left\{
\begin{array}{ll}
2x   &\textrm{, if } x\in[0, \frac{1}{2}] \\
2(1-x)&\textrm{, if } x\in[\frac{1}{2}, 1] 
\end{array}
\right.,
\ \ g(x)= \left\{
\begin{array}{ll}
\frac{4}{3}x   &\textrm{, if } x\in[0, \frac{3}{4}] \\
4(1-x)&\textrm{, if } x\in[\frac{3}{4}, 1]. 
\end{array}
\right.
$$
It is easy to see that $([0,1], (f_n))$, $([0,1], (g_n))$, and 
$([0,1], (q_n))$ satisfy the limit property. From Theorem \ref{thmA},
it follows that $([0,1], (f_n))$ is topologically conjugate to $([0,1], (g_n))$.
On the other hand, $([0,1], (f_n))$ is not topologically conjugate to $([0,1], (q_n))$,
since $q_n$ is not surjective for any $n\geq1$. More generally, we can construct a 
family $\mathscr{M}$ of UNDS satisfying the limit property containing
conjugated and non-conjugated UNDS. For that, let $\mathscr{U}$ be the set of 
unimodal maps $f:[0,1]\to[0,1]$ so that $|f'|\geq\theta(f)>1$. 
Let $\mathscr{M}:=\{([0,1], \F): f_n\to f \text{ with } f,f_n\in\mathscr{U} 
\text{ and } f \text{ has no periodic turning point}\}$.
Observe that the three UNDS constructed above belong to $\mathscr{M}$. 
\end{example}

We finish this section remarking that many concepts used in classical dynamics 
such as periodicity, recurrence and wandering domains have generalizations 
for nonautonomous systems even though none work easily, while other concepts 
such as entropy can be generalized. As the notion of attracting periodic points 
and wandering intervals are the main obstructions to the density of critical set 
(preimages of turning points) on the autonomous setting, it is natural we ask:

\medskip
\noindent
{\bf Problem 2:} Let $(J^\F,\mathcal{F})$ be a unimodal nonautonomous 
discrete dynamical systems. Under which conditions we have that
$\cc_n(\F)$ is dense in $J^\F$ for all $n\geq1$?

\medskip


\appendix

\section{The multimodal case}

In this appendix, we work with NDS where each map $f_n:J^\F\to J^\F$ is a multimodal map.
Let $(J^\mathcal{F}, \mathcal{F})$ be the NDS defined as follows:
let $J^\F=[a^\F, b^\F]$ be an interval, and 
$f_n:J^\F\to J^\F$ be a  piecewise monotone continuous map such that
either $f_n(a^\F)=a^\F$ for each $n\geq1$ or $f_n(a^\F)=b^\F$ for each $n\geq1$
and either $f_n(b^\F)=a^\F$ for each $n\geq1$ or $f_n(b^\F)=b^\F$ for each $n\geq1$. 
Furthermore, there exists $\ell\geq1$ so that for each $n\geq1$ 
there are $a^\F<c_n^\F(1)<\cdots<c_n^\F(\ell)<b^\F$ 
such that the intervals $I_n^\F(1)=\left[a^\F, c_n^\F(1)\right),
I_n^\F(2)=\left(c_n^\F(1), c_n^\F(2)\right), \ldots, I_n^\F(\ell+1)=\left(c_n^\F(\ell), b^\F\right]$ 
are the largest intervals in which $f_n$ is strictly monotone.
The points $\{c_n^\F(1), \ldots, c_n^\F(\ell):\; n\geq1\}$ are the {\it turning points} of the NDS.
We call the NDS defined above $\ell$-{\it modal nonautonomous discrete dynamical system} (short $\ell$-MNDS)

Consider the alphabet $\mathcal{A}_\F=\{I_n^\F(1), c_n^\F(1), I_n^\F(2), 
\ldots, c_n^\F(\ell), I_n^\F(\ell+1):\; n\geq1\}$.

\medskip
\noindent
{\sc Address of a point:} Let $n\geq1$. The {\em address} of a point $x\in J^\F$ 
on the level $n$ is the letter $i_{\F,n}(x)\in\mathcal A_\F$ defined by
$$
i_{\F,n}(x)= \left\{
\begin{array}{ll}
I_n^\F(j)    & \textrm{, if }  x\in I_n^\F(j)     \\
c_n^\F(j)  & \textrm{, if }  x=c_n^\F(j). 
\end{array}
\right.
$$

\medskip
\noindent
{\sc Itinerary of a point:} Let $n\geq1$. The {\em itinerary} of a point $x\in J^\F$
on the level $n$ is the sequence $I_{\mathcal{F},n}(x)\in\mathcal A_\F^{\{0,1,2,\ldots\}}$ defined by 
\[
I_{\mathcal{F},n}(x)=(i_{\mathcal{F},n}(x),i_{\mathcal{F},n+1}(f_n^1(x)),\ldots,
i_{\F,n+\ell}(f_n^\ell(x)),\ldots).
\]

\medskip
\noindent
{\sc Kneading Sequences:} The {\em kneading sequences} of $(J^\F,\F)$ 
are the sequences $\V^\F(j)=\{\V_n^\F(j)\}_{n\geq1}$, 
where $\V_n^\F(j):=I_{\F,n}(c_n^\F(j))$ and $j=1, \ldots, \ell$.

\medskip 
\begin{definition}\label{defMNDS}
Let $(J^\F,\F)$ and $(J^\G,\G)$ be two $\ell$-MNDS. We say that
\begin{enumerate}[\rm (1)] 
\item {\em $(J^\F,\F)$ and $(J^\G,\G)$ have the same kneading sequences} if 
\[
\V^\F(j)=\V^\G(j),\ \textrm{for each } j=1, \ldots, \ell.
\]
\item $(J^\F,\F)$ is {\em monotonically equivalent} 
to $(J^\G,\G)$ if for all $n\geq1$ and each $j=1, \ldots, \ell+1$:
\begin{enumerate}[$\circ$]
\item either $f_n\restriction_{I_n^\F(j)}$ and $g_n\restriction_{I_n^\G(j)}$ are
strictly increasing;
\item or $f_n\restriction_{I_n^\F(j)}$ and $g_n\restriction_{I_n^\G(j)}$
are strictly decreasing.
\end{enumerate}
\end{enumerate}
\end{definition}

%
%
The limit property for $\ell$-MNDS has a little change: if $f$ is the 
$\ell$-modal map such that $f_n$ converges uniformly to $f$, then
$c^f(i)$ is not periodic and $c^f(i)\notin\{f^{-m}(c^f(j)): m\geq1\}$
for all $i,j=1,\ldots,\ell$. The other items remain the same.
With these more general definitions, we have the following. 

%
%
\begin{theorem}\label{thmA2}
Let $(J^\F,\F)$ and $(J^\G,\G)$ be two monotonically equivalent $\ell$-modal 
nonautonomous discrete dynamical systems, and assume that both satisfy the 
limit property. Then $(J^\F,\F)$ and $(J^\G,\G)$ are topologically 
conjugate if and only if they have the same kneading sequences.
\end{theorem}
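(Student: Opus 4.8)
The plan is to follow the exact same three-step architecture used for the unimodal case (Theorem \ref{thmA}), adapting each step to the $\ell$-modal setting and to the extra bookkeeping forced by the monotonicity data. First I would prove the analogue of Proposition \ref{prop1}: if $(J^\F,\F)$ and $(J^\G,\G)$ are topologically conjugate via an order-preserving family $(h_n)$, then $h_n$ must carry the $j$-th monotonicity interval $I_n^\F(j)$ onto $I_n^\G(j)$ (here monotone equivalence is what guarantees the correspondence respects increasing/decreasing pieces rather than reversing the order of the labels), hence $h_n(c_n^\F(j))=c_n^\G(j)$ for every $j$, and then the same chasing of itineraries as in the unimodal proof gives $\V^\F(j)=\V^\G(j)$ for all $j=1,\dots,\ell$. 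This direction is essentially formal.

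Next I would establish the combinatorial equivalence statement, i.e.\ the $\ell$-modal analogue of Theorem \ref{thm1}: if the two systems are monotonically equivalent with the same kneading sequences, then there are order-preserving bijections $h_n\colon\cc_n(\F)\to\cc_n(\G)$ (where now $\cc_n(\F)$ is the union of all backward orbits of all $\ell$ turning points) intertwining $f_n$ and $g_n$ off the turning set. The proof is again by induction on $k$, building $h_n^k\colon\cc_n^k(\F)\to\cc_n^k(\G)$ on the finite preimage sets. The only real change from the unimodal argument is that each maximal monotonicity interval $J_n^\F(i)$ of $f(n,k)$ now has an associated sign-word $j\in\{j_1,\dots,j_k\}$ recording, at each of the $k$ steps, which of the $\ell+1$ branches is used; monotone equivalence ensures that the corresponding branch of $g(n,k)$ on $J_n^\G(i)$ has the same orientation, and the equality of \emph{all} kneading sequences $\V^\F(j)=\V^\G(j)$ is exactly what is needed to decide, for each turning point $c^\F_{n+k}(j)$, whether it lies in the interior, on the boundary, or outside the image $f_j(n,k)(J_n^\F(i))$ — and to know the matching fact holds on the $\G$ side. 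Having matched these incidences, one splits $J_n^\F(i)$ at the new preimage points in the same order on both sides and extends $h_n^k$ to $h_n^{k+1}$; the verification that $h_n^{k+1}$ is an order-preserving bijection intertwining the maps off the turning set, and restricts to $h_n^k$, is the same routine check as before. Then the analogue of Proposition \ref{prop2} extends each $h_n$ continuously to all of $J^\F$ using density of $\cc_n(\G)$, exactly as written.

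Finally, to upgrade to a genuine topological conjugacy I would reprove the equicontinuity estimate, which is the analogue of the last part of the proof of Theorem \ref{thmA}. Fix $\varepsilon>0$; using the limit property of $(J^\G,\G)$ choose the limiting $\ell$-modal map $g$ and $k$ with $\cc^k(g)$ being $\tfrac{\varepsilon}{8}$-dense. The strengthened non-degeneracy hypothesis — $c^g(i)$ not periodic \emph{and} $c^g(i)\notin\{g^{-m}(c^g(j))\}$ for all $i,j$ — is precisely what guarantees that for each $m<k$ and each turning point, the point $c^g(i)$ lies either strictly inside or strictly outside each interval $[g^m(x_\alpha(g)),g^m(x_\beta(g))]$ spanned by consecutive elements of $\cc^m(g)$, never on an endpoint; this is the hypothesis that makes the ``proceeding as \cite[Lemma 3.1]{EA}'' step go through in the multimodal case and yields $N_1$ with $\aaa_n^k(\G)=\aaa^k(g)$ and $d(x_j(g),x_j^\G(n))<\tfrac{\varepsilon}{8}$ for $n\ge N_1$, hence $\cc_n^k(\G)$ is $\tfrac{\varepsilon}{4}$-dense. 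The two claims (consecutive elements of $\cc_n^k(\G)$ are within $\tfrac{\varepsilon}{2}$; consecutive elements of $\cc_n^k(\F)$ are separated by at least $\tfrac{M}{2}$ for $n\ge N_2$, using the limit property of $\F$ and $\V^\F(j)=\V^\G(j)$) carry over verbatim, and the two-case argument bounding $d(h_n(x),h_n(y))$ for $d(x,y)<\delta_1$ is unchanged; the finitely many levels $n<\max\{N_1,N_2\}$ are handled by continuity. By symmetry the same works for $(h_n^{-1})$.

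The main obstacle I expect is entirely in Step 2: making the combinatorial data genuinely work with $\ell$ turning points. One must be careful that the sign-words $j\in\{1,\dots,\ell+1\}^k$ indexing the monotonicity intervals are transported correctly under $h_n^k$ (this is where monotone equivalence, not just equal kneading sequences, is indispensable — without it the labels could be permuted or reversed), and that the incidence ``$c^\F_{n+k}(j)\in\operatorname{int}/\partial/\text{outside}$ of $f_j(n,k)(J_n^\F(i))$'' is correctly read off from the finitely many kneading sequences and matches on the $\G$ side for \emph{every} $j$ simultaneously. Once that combinatorial matching is set up cleanly, everything else is a direct transcription of the unimodal proofs.
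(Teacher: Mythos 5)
Your proposal is correct and takes essentially the same route as the paper, which itself only states that Theorem \ref{thmA2} ``follows exactly the same ideas and arguments as in the proof of Theorem \ref{thmA}'' and leaves the details to the reader. You in fact supply more detail than the paper does, correctly identifying where monotone equivalence and the strengthened non-periodicity/non-preimage condition on the turning points are needed.
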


The proof of Theorem \ref{thmA2} follows exactly the same ideas and arguments
as in the proof of Theorem \ref{thmA}, and we leave it to the reader.


\bibliography{NADS}
\bibliographystyle{alpha}

\end{document}